\documentclass[a4paper,10pt]{article}
\usepackage[utf8x]{inputenc}
\usepackage{amsmath,amsthm,amssymb,graphicx,nicefrac}

\title{An Algebraic Proof for the Identities for Degree of Syzygies in Numerical Semigroup}
\author{Neeraj Kumar\footnote{Dipartimento di Matematica, Universit\`a di Genova, Genova, Italy, \emph{kumar@dima.unige.it}}, 
Ivan Martino\footnote{Matematiska institutione, Stockholm Universitet, Stockholm, Sweden, \emph{martino@math.su.se}}}

\newtheorem{exa}{Example}[section]
\newtheorem{lemma}{Lemma}[section]
\newtheorem{thm}{Theorem}[section]
\newtheorem{rem}{Remark}[section]


\begin{document}

\maketitle

\begin{abstract}
In the article \cite{Fel_paper} two new identities for the degree of syzygies are given. We present an algebraic proof of them, using only basic homological algebra tools. We also extend these results.
\end{abstract}

\section{Introduction}
A numerical semigroup is a submonoid of $(\mathbb{N}, +)$ containing the zero element: it cames up in a lot of fields, for example the study of Diophantine equations, combinatorics and commutative algebra.

One of the most interesting and open question about numerical semigroups is 
about the Frobenius number (see \cite{Book_semigroup}). The question was posed by 
J. J. Sylvester, at the far 1884, in \cite{Syl} and it stays open until 
now: there does not exist an open formula for its Frobenius Number with respect to the generators of the semigroup. 
In fact, what we know is, such a formula is not a polynomial formula (see \cite{Curtis}).

In 2006, Leonid G. Fel solved this problem for the three generated semigroups. 
Even if his solution is not a proper formula, in the paper \cite{Fel_frobenius}, 
Fel showed how much the connection between the semigroups and other branch of mathematics 
is deep. 

The same author has also worked on the relation among the degree of syzygies (see \cite{Fel_paper}). 
The aim of this article is to give an algebraic proof of its main results. We are going to work in a non standard
grading, but similar computations in the standard graded case were done in \cite{Herzog}. 

The article has the following structure: in Section \ref{sec-fel}, we introduce some 
commutative algebra tools that we use throughout this article and we present the Fel results. 
Finally, in Section \ref{sec-proof}, we show our proof of the Fel results.

\section{Fel Results}\label{sec-fel}
In this section, we introduce some notations, main results given in \cite{Fel_paper} and 
some basic definition. 

We recall that a numerical semigroup is a submonoid of the natural number containing the zero 
element and that any numerical semigroup is finitely generated, i.e.
\[
  S=\left\langle d_1,\dots, d_m\right\rangle =\left\lbrace \sum_{i=1}^m a_id_i: a_i\in \mathbb{N}\right\rbrace;
\]
we call $d_1,\dots, d_m$ the generators of $S$. The basis is minimal if any generator 
cannot be expressed as a positive sum of the others. For convenience, 
we will always consider minimal basis and $d_1< d_2< \cdots <d_m.$

\paragraph{Notation} Fel uses to denote $S=\left\langle d_1,\dots, d_m\right\rangle$ with $S(\mathbf{d}^m)$. 
We will follow the literature style (expressing the generators) to be more clear.

It is known that $\gcd(d_1,\dots, d_m)=1$ is a necessary and sufficient condition for
$\mathbb{N}\setminus S$ being a finite set. The maximum of this set is called 
Frobenius number and is denoted by $g(S)$. The conductor of the semigroup $S$ is $c(S)=g(S)+1$ and it lies in $S$.

The Hilbert series $H(S; z)$ of the semigroup $S$ is the Hilbert series of its semigroup 
algebra $\mathbb{K}[[t^s: s\in S]]\subseteq \mathbb{K}[[t]]$. Thus, $H(S; z)=\sum_{s \in S} z^{s}$.
\begin{exa}\label{ex-2-3}
  Let $S=\left\langle 2, 3\right\rangle=\left\lbrace 0,2,3,4,\dots\right\rbrace\subset \mathbb{N}$. The Frobenius number is $1$ since $\mathbb{N}\setminus S=\left\lbrace 1 \right\rbrace$. Thus, $H(S, z)=1+z^2+z^3+\dots$ and so, using $\nicefrac{1}{(1-z)}=1+z+z^2+\dots$,  $H(S, z)=1+\nicefrac{z^2}{(1-z)}$; hence $H(S, z)=\nicefrac{(1-z+z^2)}{(1-z)}$.
\end{exa}
As we saw in the previous example, $H(S; z)$ is a rational function (see \cite{Stanley}). In particular, when we 
consider the \textit{grading} such as $deg(x_i)=d_i$ in $\mathbb{K}[x_1,\dots, x_n]$, then we have $H(\mathbb{K}[x_1,\dots, x_n]; z)=\nicefrac{1}{(1-t^{d_1})\cdots(1-t^{d_m})}$ and one has
\begin{equation}\label{eq-H-forms-1}
  H(S; z)=\frac{k(S; z)}{\prod_{i=1}^m(1-z^{d_i})}.
\end{equation}
where we call $k(S; z)$ the $k$-polynomial of the semigroup $S$.
Moreover, it is a fact that $\mathbb{K}[[t^s: s\in S]]$ is a geometrical object 
of dimension $1$, this implies that
\begin{equation}\label{eq-H-forms-2}
  H(S; z)=\frac{p(S; z)}{1-z},
\end{equation}
with $p(S;z)$ having the form (\ref{eq-p}), as proved soon.
The betti numbers of a numeri\-cal semigroup $S$ are the betti numbers of the 
semigroup algebra $\mathbb{K}[[t^s: s\in S]]$. We denote $\beta_i(S)$ the $i^{\text{th}}$ 
betti number. Similarly we define the graded betti numbers $\beta_{i,j}(S)$. 


\paragraph{Notation:} We are going to forget the $S$-dependence of $\beta_i(S)$ 
and $\beta_{i,j}(S)$.

It is well know that
\begin{equation}\label{eq-k}
  H(S; z)=\frac{\sum_{i=0}^{m-1} \sum_{j} (-1)^i \beta_{i,j} z^j}{\prod_{i=1}^m (1-z^{d_i})},
\end{equation}
where $j$ runs over the graded components of the $i^{\text{th}}$ homology; thus, using (\ref{eq-H-forms-1}) and (\ref{eq-H-forms-2}), 
\[
  \sum_{i=0}^{m-1} \sum_{j} (-1)^i \beta_{i,j} z^j =   \frac{\prod_{i=1}^m (1-z^{d_i})p(S; z)}{1-z}.
\]
One can rewrite it as
\begin{equation}\label{eq-key-formula}
  \sum_{i=0}^{m-1} \sum_{j}  (-1)^i \beta_{i,j} z^j = \prod_{i=1}^m (1+\dots+z^{d_i-1}) (1-z)^{m-1}p(S; z).
\end{equation}
The following lemma is a very useful in our proof.
\begin{lemma}\label{lem-usefull}
  Let $S$ be a numerical semigroup, then $p(S; 1)=1$.
\end{lemma}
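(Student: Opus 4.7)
\medskip

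My plan is to work directly from the combinatorial definition $H(S;z)=\sum_{s\in S}z^s$ and exploit the fact that the set of gaps $G=\mathbb{N}\setminus S$ is finite, which is equivalent to the assumption $\gcd(d_1,\dots,d_m)=1$ made implicit in the setup.

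\medskip

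First, I would split the geometric series for $\mathbb{N}$ as
\[
  \frac{1}{1-z}=\sum_{s\in\mathbb{N}}z^s=\sum_{s\in S}z^s+\sum_{g\in G}z^g=H(S;z)+P_G(z),
\]
where $P_G(z):=\sum_{g\in G}z^g$ is an honest polynomial (finite sum). Then I would solve for $H(S;z)$, obtaining
\[
  H(S;z)=\frac{1}{1-z}-P_G(z)=\frac{1-(1-z)P_G(z)}{1-z}.
\]

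\medskip

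Next, I would identify the numerator with $p(S;z)$. The only subtlety is that the expression $H(S;z)=p(S;z)/(1-z)$ from (\ref{eq-H-forms-2}) requires uniqueness, i.e.\ that $p(S;z)$ is not divisible by $1-z$; this will be confirmed a posteriori because the conclusion $p(S;1)=1\neq 0$ shows the fraction is already in lowest terms. Thus I can simply declare $p(S;z)=1-(1-z)P_G(z)$ and evaluate at $z=1$ to get $p(S;1)=1-0\cdot P_G(1)=1$.

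\medskip

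I do not anticipate any real obstacle: the argument is essentially a one-line manipulation of generating functions, and the only thing to be careful about is invoking the finiteness of $G$ so that $P_G(z)$ is a polynomial (hence $(1-z)P_G(z)$ vanishes at $z=1$). The heavier machinery involving Betti numbers and the identity (\ref{eq-key-formula}) is not needed for this particular lemma; it becomes relevant only when one combines $p(S;1)=1$ with (\ref{eq-key-formula}) to extract information about $\sum_{i,j}(-1)^i\beta_{i,j}$ in the subsequent sections.
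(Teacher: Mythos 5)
Your proof is correct and is essentially the paper's own argument in a complementary form: the paper splits $S$ into its finitely many elements below $g(S)$ plus the geometric tail $z^{c(S)}/(1-z)$, obtaining $p(S;z)=(1-z)\sum_{s\in S,\,s<g(S)}z^{s}+z^{c(S)}$, while you split $\mathbb{N}$ into $S$ plus the finite gap set, obtaining $p(S;z)=1-(1-z)P_G(z)$; both come down to the observation that finiteness of the gaps makes $(1-z)H(S;z)$ a polynomial that differs from $1$ by a multiple of $(1-z)$. The uniqueness worry you raise is moot, since $p(S;z)$ is determined directly by $p(S;z)=(1-z)H(S;z)$ whether or not the fraction is in lowest terms.
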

\begin{proof}
We know that $H(S;z)=\sum_{s\in S} z^s $ and the existence of Frobenious 
number $g(S)$, together, imply that 
\[
   H(S; z) =\sum_{s\in S,\, s < g(S)} z^{s} + \sum_{i\in \mathbb{N}} z^{c(S)+i}.
\]
So, we can write it formally as
\[
   H(S; z) =\sum_{s\in S,\, s < g(S)} z^{s} + \frac{z^{c(S)}}{1-z};
\]
hence
\[
  H(S; z) =\frac{(1-z)\left( \sum_{s\in S,\, s < g(S)} z^{s}\right) + z^{c(s)}}{1-z}.
\]
Therefore one has 
\begin{equation}\label{eq-p}
 p(S; z)=(1-z)\left( \sum_{s\in S,\, s < g(S)} z^{s}\right) + z^{c(s)}.
\end{equation}
and, finally, $p(S; 1)=1$.
\end{proof}

We now present the Fel results, that we are going to prove. We always assume $H(S; z)$ Hilbert series is known in the form
\[
    H(S; z)=\frac{k(S; z)}{(1-z^{d_1})\cdots(1-z^{d_m})}.
\]
\begin{thm}\label{thm-not-complex}
Let $S=\left\langle d_1,\dots, d_m\right\rangle$. Then the following polynomial identities hold.
 \begin{equation}\label{eq-1}
      \sum_{i=0}^{m-1} \sum_{j}(-1)^i \beta_{i,j}j^r=0,
 \end{equation}
for $r=0,\dots, m-2$ and
\begin{equation}\label{eq-2}
      \sum_{i=0}^{m-1} \sum_{j} (-1)^i \beta_{i,j}j^{m-1}=(-1)^{m-1}(m-1)!\prod_{i=1}^m d_i.
\end{equation}
\end{thm}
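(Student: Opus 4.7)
The plan is to exploit the factored form of the key identity (\ref{eq-key-formula}) and translate the $j^r$ moments of the alternating sum of Betti numbers into derivatives at $z = 1$.

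First, I would set $F(z) = \sum_{i,j} (-1)^i \beta_{i,j} z^j$ so that by (\ref{eq-key-formula}) one has
\[
  F(z) = \prod_{i=1}^m (1+z+\cdots+z^{d_i-1}) \cdot (1-z)^{m-1} \cdot p(S;z).
\]
The crucial structural observation is that the explicit factor $(1-z)^{m-1}$ forces $F$ to vanish to order at least $m-1$ at $z = 1$. In particular, $F^{(r)}(1) = 0$ for every $r = 0, 1, \ldots, m-2$.

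Next, I would connect the power-sum moments $\sum_j a_j j^r$ (writing $a_j := \sum_i (-1)^i \beta_{i,j}$) with the ``falling-factorial'' moments $F^{(r)}(1) = \sum_j a_j\, j(j-1)\cdots(j-r+1)$. Expanding $j^r$ in falling factorials via Stirling numbers of the second kind (or, equivalently, by a routine induction on $r$) one obtains $\sum_j a_j j^r = \sum_{k=0}^r S(r,k)\, F^{(k)}(1)$. For $r \le m-2$ every term on the right vanishes, which gives identity (\ref{eq-1}). For $r = m-1$ only the $k = m-1$ term survives, with coefficient $S(m-1,m-1) = 1$, so $\sum_j a_j j^{m-1} = F^{(m-1)}(1)$.

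Finally I would compute $F^{(m-1)}(1)$ by the Leibniz rule applied to $F(z) = G(z)(1-z)^{m-1}$, where $G(z) = \prod_{i=1}^m (1+z+\cdots+z^{d_i-1}) \cdot p(S;z)$. Every summand in which at least one factor of $(1-z)$ survives vanishes at $z=1$; only the term where all $m-1$ derivatives fall on $(1-z)^{m-1}$ contributes, yielding $F^{(m-1)}(1) = G(1)\cdot (-1)^{m-1}(m-1)!$. Since $(1+z+\cdots+z^{d_i-1})\big|_{z=1} = d_i$ and $p(S;1) = 1$ by Lemma \ref{lem-usefull}, we have $G(1) = \prod_{i=1}^m d_i$, which gives (\ref{eq-2}). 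The only real obstacle is the bookkeeping in passing between power-sum moments and falling-factorial moments; the rest is a mechanical application of the Leibniz rule together with $p(S;1)=1$.
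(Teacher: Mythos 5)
Your proof is correct and follows essentially the same route as the paper: both extract the identities from the factored key formula (\ref{eq-key-formula}) by taking derivatives at $z=1$, using the explicit $(1-z)^{m-1}$ factor to kill the low-order derivatives and the Leibniz rule together with $p(S;1)=1$ to evaluate the $(m-1)$-st. Your Stirling-number conversion between power moments and falling-factorial moments is just a cleaner packaging of the induction the paper uses for the same step.
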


Here we stress that the Fel result does not include the case $r=0$. We show in the next example how this identities work.

\begin{exa}
Let $S=\langle4,7,9\rangle$. One has that
\[
	H(S; z)=\frac{1-(z^{16}+z^{18}+z^{21})+(z^{25}+z^{30})}{(1-z^{4})(1-z^{7})(1-z^{9})}
\]
then $\beta_{0,0}=\beta_{1,16}=\beta_{1,18}=\beta_{1,21}=\beta_{2,25}=\beta_{2,30}=1$.
Thus for $r=0$, one has
\[
  \sum_{i=0}^{m-1} \sum_{j} (-1)^i \beta_{i,j}=-\beta_{1,16}-\beta_{1,18}-\beta_{1,21}+\beta_{2,25}+\beta_{2,30}+\beta_{0,0}=3-3=0;
\]
if $r=1$ then we have
\begin{eqnarray*}
  &&\sum_{i=0}^{m-1} \sum_{j} (-1)^i \beta_{i,j}j=\\
  &=&-\beta_{1,16}16-\beta_{1,18}18-\beta_{1,21}21+\beta_{2,25}25+\beta_{2,30}30+\beta_{0,0}0\\
  &=&-(16+18+21)+(25+30)=0;
\end{eqnarray*}
finally if $r=2$, we obtain
\begin{eqnarray*}
  &&\sum_{i=0}^{m-1} \sum_{j} (-1)^i \beta_{i,j}j^2=\\
  &=&-\beta_{1,16}16^2-\beta_{1,18}18^2-\beta_{1,21}21^2+\beta_{2,25}25^2+\beta_{2,30}30^2+\beta_{0,0}0^2\\
  &=&-(16^2+18^2+21^2)+(25^2+30^2)=(-1)^2\cdot2!\cdot4\cdot7\cdot9.
\end{eqnarray*}
\end{exa}

Moreover, it is possible to have a complex version of the previous identities. 
We define $w_q$ to be the number of generators divisible by  the integer number $q$.
%

\begin{thm}\label{thm-complex}
Let $S=\left\langle d_1,\dots, d_m\right\rangle$. For every integer numbers $q$ and $n$ such that $1\leq q \leq d_m$, $gcd(n,q)=1$ and $w_q>0$, then the following identity holds:
\begin{equation}\label{eq-3}
  \sum_{k=0}^{m-1} \sum_{j} (-1)^k \beta_{k,j}j^r e^{i\frac{2\pi n}{q} j}=0,
\end{equation}
for $r=0, 1,\dots, w_{q}-1$ (if $r=0$ we interprete $0^0$ as $1$). 
\end{thm}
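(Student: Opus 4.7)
The plan is to proceed exactly as one would for Theorem \ref{thm-not-complex}, but now evaluating everything at a root of unity instead of at $1$. Let
\[
F(z):=\sum_{k=0}^{m-1}\sum_{j}(-1)^k\beta_{k,j}z^j
=\prod_{i=1}^{m}(1+z+\cdots+z^{d_i-1})\,(1-z)^{m-1}\,p(S;z),
\]
by (\ref{eq-key-formula}), and let $\zeta:=e^{i 2\pi n/q}$. The identity (\ref{eq-3}) is the assertion that an appropriate $r$-th derivative of $F$ vanishes at $\zeta$.

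First I would note that, since $\gcd(n,q)=1$, the number $\zeta$ is a primitive $q$-th root of unity. Writing each cyclotomic factor as $1+z+\cdots+z^{d_i-1}=(1-z^{d_i})/(1-z)$, I observe that the $i$-th factor vanishes at $z=\zeta$ if and only if $\zeta^{d_i}=1$, i.e.\ iff $q\mid d_i$. By definition of $w_q$, exactly $w_q$ of these factors vanish at $\zeta$; taking $q\geq 2$ (the case $q=1$ is already contained in Theorem \ref{thm-not-complex}), the factor $(1-z)^{m-1}$ is nonzero at $\zeta$. Consequently $(z-\zeta)^{w_q}$ divides $F(z)$ as polynomials, so
\[
F^{(s)}(\zeta)=0\qquad\text{for every } s=0,1,\ldots,w_q-1.
\]

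To extract the moments $j^r$ I would introduce the Euler operator $\theta:=z\,d/dz$. Since $\theta z^j=jz^j$, iteration gives $\theta^r z^j=j^r z^j$, hence
\[
\theta^r F(z)=\sum_{k=0}^{m-1}\sum_{j}(-1)^k\beta_{k,j}\,j^r\,z^j,
\]
and evaluating at $z=\zeta$ yields precisely the left-hand side of (\ref{eq-3}) (with the convention $0^0=1$ taking care of $r=0$). The standard expansion $\theta^r=\sum_{s=0}^{r}S(r,s)\,z^s\,d^s/dz^s$, where $S(r,s)$ are Stirling numbers of the second kind, then gives
\[
\theta^r F(\zeta)=\sum_{s=0}^{r}S(r,s)\,\zeta^s\,F^{(s)}(\zeta)=0
\]
for every $r\leq w_q-1$, by the vanishing established above. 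This is (\ref{eq-3}).

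The only genuinely delicate point is choosing the correct differential operator: ordinary derivatives of $F$ at $\zeta$ produce the falling-factorial moments $\sum(-1)^k\beta_{k,j}\,j(j-1)\cdots(j-r+1)\,\zeta^j$, from which the stated identity with $j^r$ would still follow by a triangular change of basis, but the Euler operator $\theta$ makes the passage from ``order of vanishing $w_q$'' to ``vanishing of the $r$-th moment for $r<w_q$'' completely transparent. Once the factorisation (\ref{eq-key-formula}) is on the table and the zero locus of each cyclotomic factor is identified with $q\mid d_i$, the rest of the argument is mechanical and does not use Lemma \ref{lem-usefull} at all.
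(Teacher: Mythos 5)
Your proof is correct, and it rests on the same foundation as the paper's argument---equation (\ref{eq-key-formula}) together with the observation that $\zeta=e^{i2\pi n/q}$ annihilates exactly $w_q$ of the factors $1+z+\cdots+z^{d_i-1}$ (namely those with $q\mid d_i$)---but the way you extract the moments is genuinely different and cleaner. The paper differentiates $r$ times, obtains the falling-factorial moments $\sum(-1)^k\beta_{k,j}\,j(j-1)\cdots(j-r+1)\,z^{j-r}$, and then runs an induction on $r$ to peel off the lower-order terms; moreover, to convert $z^{j-r}$ into $z^{j}$ it performs the substitution $z=e^{i\frac{2\pi n}{q}\frac{j}{j-r}}$, which depends on the summation index $j$ and is therefore not a legitimate substitution into a polynomial identity (one should instead evaluate at $\zeta$ and multiply through by $\zeta^{r}$). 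Your Euler operator $\theta=z\,d/dz$ produces $j^{r}z^{j}$ directly, so no index-dependent substitution and no induction are needed: the expansion $\theta^{r}=\sum_{s}S(r,s)\,z^{s}\,d^{s}/dz^{s}$ reduces everything to $F^{(s)}(\zeta)=0$ for $s<w_q$, which follows from $(z-\zeta)^{w_q}\mid F(z)$ since each of the $w_q$ vanishing cyclotomic factors has a simple zero at $\zeta$ and, for $q\geq2$, the factor $(1-z)^{m-1}$ does not vanish there. You are also right that Lemma \ref{lem-usefull} plays no role here. One caveat: your parenthetical claim that ``the case $q=1$ is already contained in Theorem \ref{thm-not-complex}'' is not accurate---for $q=1$ one has $w_1=m$, so (\ref{eq-3}) would assert the vanishing of the $(m-1)$-st moment, which (\ref{eq-2}) shows equals $(-1)^{m-1}(m-1)!\prod_i d_i\neq0$; the statement really requires $\zeta\neq1$, i.e.\ $q\geq2$, a restriction the paper leaves implicit as well.
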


In \cite{Fel_paper}, it is required $1\leq n \leq \frac{q}{2}$, too. But we found that this hypothesis can be removed.

\section{Proof of the results}\label{sec-proof}
In this section we will prove Theorem \ref{thm-not-complex} and Theorem 
\ref{thm-complex}. Before going on we remark and easy but useful fact.

\begin{rem}\label{rem-imp}
If $F(z)=(1-z)^k G(z)$, where $G(z)$ is a polynomial, then, for $i<k$, 
the $i^{\text{th}}$-derivative $F^{(i)}(1)=0$; moreover $F^{(k)}(1)=(-1)^k k!G(1)$
\end{rem}

\begin{proof}[Proof of Theorem \ref{thm-not-complex}]
We will prove (\ref{eq-1}) by induction. Let $r=0$; thus we substitute $z=1$ 
in (\ref{eq-key-formula}), and one has the equality promised.
Now, let $r=1$, thus we differentiate the l.h.s. of the equation (\ref{eq-key-formula}) with respect to $z$ and one has
\begin{eqnarray}
  \frac{d}{dz}\left[\sum_{i=0}^{m-1} \sum_{j} (-1)^i \beta_{i,j} z^j\right] 
  &=&\sum_{i=0}^{m-1} \sum_{j}  (-1)^i \beta_{i,j} \frac{d}{dz}[z^j] \nonumber\\
  &=&\sum_{i=0}^{m-1} \sum_{j}  (-1)^i \beta_{i,j} j z^{j-1}. \label{eq-diff-1-left}
\end{eqnarray}

Since the Remark \ref{rem-imp}, differentiating the r.h.s. of (\ref{eq-key-formula}) with respect to $z$ and, substituting $z=1$, we get all terms being zero. Hence, substituting $z=1$ in (\ref{eq-diff-1-left}), we get the l.h.s. of (\ref{eq-1}) and thus we proved the equality
\[
  \sum_{i=0}^{m-1} \sum_{j}  (-1)^i \beta_{i,j} j = 0.
\]

This is the basis of the induction and to prove the generic step for $k<m-1$ we claim 
that (\ref{eq-1}) is true for $r=1,\dots, k-1$. We differentiate $k$ times the l.h.s. of (\ref{eq-key-formula}) with respect to $z$:
\begin{eqnarray*}
  \frac{d^k}{dz^k}\left[\sum_{i=0}^{m-1} \sum_{j}  (-1)^i \beta_{i,j} z^j\right] 
  &=&\sum_{i=0}^{m-1} \sum_{j}  (-1)^i \beta_{i,j} \frac{d^k}{dz^k}[z^j]\\
  &=&\sum_{i=0}^{m-1} \sum_{j}  (-1)^i \beta_{i,j} j(j-1)\cdots (j-k) z^{j-k}.
\end{eqnarray*}
Since $j(j-1)\cdots (j-k)$ is a polynomial in $j$ and the leading term is $j^k$, we can 
split the sum in the following sums
\begin{equation*}
 \sum_{i=0}^{m-1} \sum_{j}  (-1)^i \beta_{i,j} j(j-1)\cdots (j-k) z^{j-k}=  \sum_{l=1}^{k}\left[\sum_{i=0}^{m-1} \sum_{j}  (-1)^i \beta_{i,j} c_l j^l z^{j-k}\right],
\end{equation*}
where $c_l$'s are integer numbers. Substituting $z=1$, one has that all the terms in the 
square brakets are zero for $l=1,\dots, k-1$ by induction and one obtains just the $l=k$ term:
\begin{equation*}
 \sum_{i=0}^{m-1} \sum_{j}  (-1)^i \beta_{i,j} j(j-1)\cdots (j-k)
  =\sum_{i=0}^{m-1} \sum_{j}  (-1)^i \beta_{i,j} j^k.
\end{equation*}
Differentiating the l.h.s. of (\ref{eq-key-formula}) with respect to $z$ (again, using the Remark \ref{rem-imp}), we have the same computation as before, this is zero for $z=1$. This ends the proof of the identity (\ref{eq-1}).

The only difference comes, when $r=m-1$. This is easy to prove with the same idea: 
differentiating $m-1$ times and substituting $z=1$, one has the l.h.s. of the (\ref{eq-2}). 
About the r.h.s. we need to consider the $m-1$-differentiation and use for the last time the Remark \ref{rem-imp}; we see that only 
one term survives. In fact, substituting $z=1$, one has
\[
  \frac{d^{m-1}}{dz^{m-1}}[r.h.s]=\left(\prod_{i=1}^m d_i\right) (-1)^{m-1}(m-1)!  p(S; 1).
\]
Since Lemma \ref{lem-usefull}, one obtains the claim:
\[
  \sum_{i=0}^{m-1} \sum_{j} (-1)^i \beta_{i,j} j^{m-1} = (-1)^{m-1}(m-1)! \prod_{i=1}^m d_i.
\]
\end{proof}

The idea of the proof is similar for the second theorem, but, now, we have more accurate substitution. In this proof we will also use a similar remark.

\begin{rem}\label{rem-imp-2}
Let $F(z)=h_1(z)h_2(z)\dots h_s(z) G(z)$, where $h_i(z)$ and $G(z)$ are a polynomials and let $\alpha\in \mathbb{C}$ be a root of $h_i(z)$ for exactly $k$ indices; then, for $i<k$, the $i^{\text{th}}$-differentiation of $F(z)$ is zero at $z=\alpha$.
\end{rem}

\begin{proof}[Proof of Theorem \ref{thm-complex}]
We want to prove the equality for $r=0$: in this case, we directly substitute $z=e^{ i \frac{2\pi n}{q}}$ in the expression (\ref{eq-key-formula}): l.h.s becomes
\begin{eqnarray*}
  \sum_{k=0}^{m-1}\sum_{j=0} (-1)^k\beta_{k,j} z^j
  &=&\sum_{k=0}^{m-1} \sum_{j}  (-1)^k \beta_{k,j} e^{i\frac{2\pi n}{q}j}\\
  &=&1+\sum_{k=1}^{m-1} \sum_{j} (-1)^k \beta_{i,j} e^{i\frac{2\pi n}{q}j}.
\end{eqnarray*}
We stress that in this case $j$ runs from $0$, because of $\beta_{0,0}=1$. If we show that r.h.s. of the (\ref{eq-key-formula}) is zero, under the substitution $e^{i\frac{2\pi n}{q}}$, then we have proved (\ref{eq-3}) for $r=0$. To show that, we need $w_q>0$. Under this condition, we observe that $e^{i\frac{2\pi n}{q}}$ is a $d_i^{\text{th}}$ root of unity for some $i$. Hence the r.h.s. of (\ref{eq-key-formula}) is zero for $z=e^{i\frac{2\pi n}{q}}$.

Now, let us prove the equality for $r=1$. As we did in the previous proof we use the (\ref{eq-key-formula}) and we differentiate it with respect to $z$. The l.h.s becames
\[
  \sum_{k=0}^{m-1} \sum_{j} (-1)^k \beta_{k,j}jz^{j-1}.
\]
Thus, we substitute $z=e^{i \frac{2\pi n}{q}\frac{j}{j-1}}$,
\begin{eqnarray*}
  \sum_{k=0}^{m-1} \sum_{j} (-1)^k \beta_{k,j}j z^{j-1}
  &=&\sum_{k=0}^{m-1} \sum_{j} (-1)^k \beta_{k,j}j e^{i \frac{2\pi n}{q}\frac{j}{j-1}(j-1)}\\
  &=&\sum_{k=0}^{m-1} \sum_{j} (-1)^k \beta_{k,j}j e^{i \frac{2\pi n}{q}j};
\end{eqnarray*}
this is the l.h.s of the (\ref{eq-3}). Now we differentiate the r.h.s. of (\ref{eq-key-formula}) with respect of $z$ and we substitute $z=e^{i \frac{2\pi n}{q}\frac{j}{j-1}}$), thus it vanishes. In fact, since $\gcd(n,q)=1$, $e^{i \frac{2\pi n}{q}\frac{j}{j-1}}$ is a $q^{\text{th}}$ root of unity. Moreover by hypotesis, $1\leq q \leq d_m$ and $r<w_q-1$, hence $e^{i \frac{2\pi n}{q}\frac{j}{j-1}}$ is a $d_i^{\text{th}}$ root of unity for exactly $w_q$ generators of the semigroup. Since $(1+z+\dots+z^{d_i-1})=\frac{1-z^{d_i}}{1-z}$, then $e^{i \frac{2\pi n}{q}\frac{j}{j-1}}$ annihilates $w_q$ of the factors $(1+z+\dots+z^{d_i-1})$; so we use the Remark \ref{rem-imp-2} and one has the claim.

Therefore we have proved that the statement for $r=1$. We want to gene\-ralize the statement 
for bigger $r$. In fact we are going to prove it by induction. The base of the induction is proved. 

Let the statement be true for $r=1,\dots, p<w_q-1$ and let us prove the result for $r=p+1$.

First of all we differentiate $r$ times the l.h.s. of (\ref{eq-key-formula}) with respect to $z$:
\begin{equation*}
 \frac{d^r}{dz^r}\left[\sum_{k=0}^{m-1} \sum_{j}  (-1)^k \beta_{k,j} z^j\right] 
=\sum_{k=0}^{m-1} \sum_{j} (-1)^k \beta_{k,j} j(j-1)\cdots (j-r) z^{j-r}.
\end{equation*}
So, we substitute $z= e^{i\frac{2\pi n}{q}\frac{j}{j-r}}$ and one has
\[
  \sum_{k=0}^{m-1} \sum_{j}  (-1)^k \beta_{k,j} j(j-1)\cdots (j-(r-1)) e^{i\frac{2\pi n}{q}j}.
\]
Therefore, expanding the product $j(j-1)\cdots (j-(r-1))$, one has $r$ summands of the form
\[
  \sum_{k=0}^{m-1} \sum_{j} (-1)^k \beta_{k,j} c_lj^l e^{i\frac{2\pi n}{q}j},
\]
where $c_l$'s are integer numbers, and by induction these summands are zero for $l=1,\dots, r-1=p$. 
Hence we obtain
\[
  \frac{d^r}{dz^r}\left[\sum_{k=0}^{m-1} \sum_{j} (-1)^k \beta_{k,j} z^j\right] 
 = \sum_{k=0}^{m-1} \sum_{j} (-1)^k \beta_{k,j} j^{r} e^{i\frac{2\pi n}{q}j},
\]
Now, let us consider $r^{\text{th}}$ differentiation of the r.h.s. of (\ref{eq-key-formula}). Similarly to the $r=1$ case, using the Remark \ref{rem-imp-2}; this is zero. Thus, we proved the equality. 
\end{proof}

\section*{Acknowledgements}
We thanks to Ralf Fr\"{o}berg, Mats Boij and Alexander Engstr\"{o}m for the support and the help given during the Pragmatic 2011.

\nocite{*}
\addcontentsline{toc}{section}{\textbf{Bibliography}}
\bibliographystyle{plain}

\end{document}